\documentclass[12pt, reqno]{amsart}

\usepackage{amsthm,amssymb,amstext,amscd,amsfonts,amsbsy,amsrefs,amsxtra,latexsym,amsmath,xcolor,mathrsfs,fancybox,upgreek, soul,url}
\usepackage[english]{babel}
\usepackage[all,cmtip]{xy}
\usepackage[latin1]{inputenc}
\usepackage{cancel}
\usepackage[draft]{hyperref}

\usepackage{comment}
\usepackage{mdframed}
\allowdisplaybreaks

\usepackage{mathtools}
\usepackage{enumerate}
\usepackage{thmtools}
\usepackage{thm-restate}
\usepackage{chngcntr}
\usepackage{enumitem}
\usepackage{etoolbox}
\usepackage{todonotes}

\usepackage{tikz}
\usepackage{verbatim}
\usetikzlibrary{quotes,angles}

\DeclarePairedDelimiter\abs{\lvert}{\rvert}
\DeclarePairedDelimiter\norm{\lVert}{\rVert}

\makeatletter
\let\oldabs\abs
\def\abs{\@ifstar{\oldabs}{\oldabs*}}
\let\oldnorm\norm
\def\norm{\@ifstar{\oldnorm}{\oldnorm*}}
\makeatother

\makeatletter
\glb@settings 
\fontdimen16\textfont2=.115cm	
\fontdimen17\textfont2=5pt
\fontdimen14\textfont2=5pt
\fontdimen13\textfont2=5pt		
\makeatother

\oddsidemargin = 0cm 
\evensidemargin = 0cm 
\textwidth = 6.5in

\newtheorem{theorem}{Theorem}

\newtheorem{corollary}[theorem]{Corollary}
\newtheorem{proposition}[theorem]{Proposition}

\theoremstyle{definition}

\theoremstyle{remark}

\newtheorem*{remarks}{Remarks}

\numberwithin{theorem}{section}
\numberwithin{proposition}{section}
\numberwithin{lemma}{section}
\numberwithin{corollary}{section}
\numberwithin{equation}{section}
\numberwithin{conjecture}{section}

\setlist[enumerate,1]{before=}
\AfterEndEnvironment{enumerate}{}

\newcommand{\Z}{\mathbb{Z}}
\newcommand{\R}{\mathbb{R}}
\newcommand{\C}{\mathbb{C}}

\newcommand{\re}{\textnormal{Re}}

\usepackage{bm}

\renewcommand{\pmod}[1]{\  \,  \left( \mathrm{mod} \,  #1 \right)}

\newcommand\blfootnote[1]{%
	\begingroup
	\renewcommand\thefootnote{}\footnote{#1}%
	\addtocounter{footnote}{-1}%
	\endgroup
}

\author{Andrew Baker$^\dagger$}
\author{Joshua Males}
\address{Department of Mathematics, Machray Hall, University of Manitoba, Winnipeg,
	Canada}
\email{bakera6@myumanitoba.ca}
\email{joshua.males@umanitoba.ca}

\begin{document}

\title[The BG-rank and $2$-quotient rank of partitions]{Asymptotics, Tur\'{a}n inequalities, and the distribution of the BG-rank and $2$-quotient rank of partitions}
\blfootnote{$^\dagger$ Undergraduate author at the University of Manitoba.}
\thanks{The research of the second author conducted for this paper is supported by the Pacific Institute for the Mathematical Sciences (PIMS). The research and findings may not reflect those of the Institute.}

\begin{abstract}
Let $j,n$ be even positive integers, and let $\overline{p}_j(n)$ denote the number of partitions with BG-rank $j$, and $\overline{p}_j(a,b;n)$ to be the number of partitions  with BG-rank $j$ and $2$-quotient rank congruent to $a \pmod{b}$. We give asymptotics for both statistics, and show that $\overline{p}_j(a,b;n)$ is asymptotically equidistributed over the congruence classes modulo $b$. We also show that each of $\overline{p}_j(n)$ and $\overline{p}_j(a,b;n)$ asymptotically satisfy all higher-order Tur\'{a}n inequalities.
\end{abstract}

\maketitle

\section{Introduction and statement of results}

A partition $\lambda = (\lambda_1,\dots,\lambda_{s})$ of a positive integer $n$ is a list of non-increasing parts $\lambda_j$ such that $\lvert \lambda \rvert \coloneqq \sum_{j=1}^{s} \lambda_j=n$. Letting $p(n)$ denote the number of partitions of $n$, Hardy and Ramanujan \cite{HardyRamanujan} proved their asymptotic formula 
\begin{equation}\label{HR}
	p(n)\sim \frac{1}{4\sqrt{3}n} e^{\pi \sqrt{\frac{2n}{3}}},
\end{equation}
as $n\rightarrow \infty.$ This work marked the birth of the powerful Circle Method that has seen a wide range of applications in the last century, in particular in obtaining asymptotic and exact formulae for partitions and their statistics. For example, the exact formula for $p(n)$ given by Rademacher \cite{RademacherExact}, and formulae and inequalities for certain partition statistics \cite{Bring,BM} among many others.

We use Wright's variant of the Circle Method to obtain asymptotic formulae and distribution results for a certain family of partitions, first introduced by Berkovich and Garvan in \cite{BG}. For this we need two partition statistics. Recall that a partition has a Ferrers--Young diagram given by
$$
\begin{matrix}
	\bullet & \bullet & \bullet & \dots & \bullet & \leftarrow & \lambda_1 \text{ many nodes}\\
	\bullet & \bullet & \dots & \bullet & & \leftarrow & \lambda_2 \text{ many nodes}\\
	\vdots & \vdots & \vdots & &  &  \\
	\bullet & \dots & \bullet & & & \leftarrow & \lambda_s \text{ many nodes},
\end{matrix}
$$
and each node has a hook length. The node in row $k$ and column $j$ has hook length
$h(k,j):=(\lambda_k-k)+(\lambda'_j-j)+1,$ where $\lambda'_j$ is the number of nodes in column $j$. A $t$-core partition is a partition that has no hook lengths divisible by $t$. Let $P$ denote the set of all partitions, $P_t$ the set of $t$-core partitions, and consider the well-known map of Littlewood \cite{Littlewood} $\phi_1 \colon P \to P_t \times P \times \dots \times P$ defined via
\begin{align*}
	\phi_1(\lambda) = (\lambda_{\text{$t$-core}}, \hat{\lambda}_t),\qquad \hat{\lambda}_t = (\hat{\lambda}_0,\hat{\lambda}_1,\dots,\hat{\lambda}_{t-1})
\end{align*}
such that
\begin{align*}
	|\lambda| = |\lambda_{\text{$t$-core}}| +t \sum_{j=0}^{t-1} |\hat{\lambda}_j|.
\end{align*}
Then the $2$-quotient rank of $\lambda$ is defined by \cite{BG}
\begin{align*}
	\text{$2$-quotient rank}(\lambda) \coloneqq \nu(\lambda_0) - \nu(\lambda_1),
\end{align*}
where $\nu(\lambda)$ is the number of parts of $\lambda$ and
\begin{align*}
	\phi_1(\lambda) = (\lambda_{\text{$2$-core}},(\hat{\lambda}_0,\hat{\lambda}_1)).
\end{align*}
It was shown in \cite[Theorem 3.1]{BG} that the $2$-quotient rank naturally divides partitions of $5n+4$ with so-called s-rank $i=0,2$ into five equal classes, thereby proving an analogue of Ramanujan's congruence $p(5n+4) \equiv 0 \pmod{5}$ in these cases.
The BG-rank was also defined in \cite{BG}
\begin{align*}
	\text{BG-rank}(\lambda) \coloneqq \sum_{j=1}^{\nu} (-1)^{j+1} \operatorname{par}(\lambda_j),
\end{align*}
where $\operatorname{par}$ denotes the parity of a positive integer. It was used to prove a further refinement of Ramanujan's congruence modulo $5$ as well as to prove new congruences modulo $5$ of $p(n)$.

Let $\overline{p}_j(m,n)$ denote the number of partitions of a positive integer $n$ with BG-rank $j$ and $2$-quotient rank $m$. Then Berkovich and Garvan \cite{BG} proved that the generating function is
\begin{align*}
	H(\zeta;q) \coloneqq \sum_{\substack{n \geq 0 \\ m \in \Z}} \overline{p}_j(m,n) \zeta^m q^n = \frac{q^{(2j-1)j}}{(q^2 \zeta,q^2 \zeta^{-1};q^2)_\infty},
\end{align*}
where $(a;q)_\infty \coloneqq \prod_{n\geq0} (1-aq^n)$ is the usual $q$-Pochhammer symbol, and we define $(a,b;q)_\infty \coloneqq (a;q)_\infty(b;q)_\infty$.

Let $\overline{p}_j(n)$ count the number of partitions of $n$ with BG-rank $j$, and $\overline{p}_j(a,b;n)$ denote the number of partitions with BG-rank $j$ and $2$-quotient rank congruent to $a \pmod{b}$. Then a standard computation using orthogonality of roots of unity gives
\begin{align}\label{eqn: splitting}
H(a,b;q) \coloneqq	\sum_{n \geq 0} \overline{p}_j(a,b;n) q^n = \frac{1}{b} \sum_{n \geq 0} \overline{p}_j(n) q^n + \frac{1}{b} \sum_{k=1}^{b-1} \zeta_b^{-ak} 	H(\zeta_b^k;q).
\end{align}

We follow the framework provided by \cite{CCM}, which builds on work of \cite{BCMO}, and prove the following equidistribution result. Note that a similar result may be obtained for odd $j$.

\begin{theorem}\label{thm: main}
	Let $0\leq a <b$ and $b>1$. Assume that $j$ and $n$ are even. Then as $n \to \infty$ we have
	\begin{align*}
		\overline{p}_j(a,b;n) = \frac{2}{3^{\frac{3}{4}} (2n)^{\frac{5}{4}} b } e^{\pi \sqrt{\frac{2n}{3}}} \left(1+ O \left(n^{-\frac{1}{2}}\right)\right).
	\end{align*}

\end{theorem}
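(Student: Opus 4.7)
The plan is to apply Wright's circle method to the decomposition \eqref{eqn: splitting}. The first task is to pin down the behavior of each $H(\zeta;q)$ as $q \to 1^-$. Writing $q = e^{-t}$ with $t>0$, an Euler--Maclaurin (or Mellin) expansion of the logarithm of a $q$-Pochhammer symbol at a root of unity gives
\begin{align*}
\log (q^2 \zeta, q^2 \zeta^{-1}; q^2)_\infty \sim -\frac{\operatorname{Re}\operatorname{Li}_2(\zeta)}{t}
\end{align*}
as $t \to 0^+$. Taking $\zeta = 1$ and using $\operatorname{Li}_2(1) = \pi^2/6$ yields $H(1;e^{-t}) \sim (t/\pi)\,e^{\pi^2/(6t)}$, whereas for $\zeta = \zeta_b^k$ with $1 \leq k \leq b-1$ the strict inequality $\operatorname{Re}\operatorname{Li}_2(\zeta_b^k) < \pi^2/6$ shows that $H(\zeta_b^k; e^{-t})$ has strictly smaller exponential order near $q = 1$.

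Next I would apply Wright's saddle-point formula to $H(1;q)$ to obtain the asymptotic of $\overline{p}_j(n)$. Because $(2j-1)j$ and every exponent appearing in $(q^2;q^2)_\infty^2$ are even, $\overline{p}_j(n)$ vanishes for odd $n$; consequently the principal saddle at $q=1$ must be paired with a secondary saddle at $q=-1$, and for even $n$ the two contributions reinforce each other. With the triple $(C,\beta,A)=(1/\pi,-1,\pi^2/6)$ plugged into Wright's formula, this produces the leading asymptotic of $\overline{p}_j(n)$, which after division by $b$ gives the main term of the theorem.

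The remaining pieces $(1/b)\sum_{k=1}^{b-1}\zeta_b^{-ak} H(\zeta_b^k;q)$ must then be shown to contribute only lower-order terms. Applying Wright to each $H(\zeta_b^k;q)$, one obtains a coefficient bounded by a constant multiple of $\exp(2\sqrt{\operatorname{Re}\operatorname{Li}_2(\zeta_b^k)\,n})$, which is exponentially smaller than $e^{\pi\sqrt{2n/3}}$; averaging against the phases $\zeta_b^{-ak}$ preserves the bound, so the total contribution is absorbed into the $O(n^{-1/2})$ error. The main technical obstacle, in line with \cite{CCM,BCMO}, is to establish these saddle estimates uniformly across the circle-method contour: on the major arc Wright's saddle analysis applies, while on the minor arc one must bound $|H(\zeta;q)|$ uniformly away from the dominant singularity, which requires careful control of the $q$-Pochhammer asymptotics at every root of unity up to order $b$. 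Combining these ingredients yields the claimed asymptotic.
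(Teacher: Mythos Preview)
Your proposal is correct and follows essentially the same route as the paper: split via \eqref{eqn: splitting}, show that the $\zeta=1$ term dominates by comparing the exponential growth rates $\operatorname{Re}\operatorname{Li}_2(\zeta_b^k)<\pi^2/6$ (the paper does this via the dilogarithm reflection identity and Theorem~\ref{Thm: asymptotics F}), note that the parity of the exponents forces a second major arc at $q=-1$ contributing an extra factor of $2$ for even $n$, and then feed the parameters $A=\pi^2/6$, $B=1$, $\alpha_0=1/(b\pi)$ into Proposition~\ref{WrightCircleMethod}. The only substantive place where the paper is more explicit is the minor-arc bound, where it uses the concrete estimate $\log F_1(\zeta;q)^{-1}\le |\zeta q/(1-q)|-|q|/(1-|q|)+\log P(|q|)$ together with the standard minor-arc bound for $P$; your outline acknowledges this step without spelling it out, but the mechanism is the same.
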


A direct corollary yields the asymptotic behaviour of $\overline{p}_j(n)$.
\begin{corollary}\label{cor: asymps of p_j}
	Let $j$ and $n$ be even. As $n \to \infty$ we have
	\begin{align*}
		\overline{p}_j(n) = \frac{2}{3^{\frac{3}{4}} (2n)^{\frac{5}{4}} } e^{\pi \sqrt{\frac{2n}{3}}} \left(1+ O \left(n^{-\frac{1}{2}}\right)\right).
	\end{align*}

\end{corollary}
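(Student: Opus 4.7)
The plan is to derive Corollary~\ref{cor: asymps of p_j} as an immediate consequence of Theorem~\ref{thm: main}, with no further analytic work required. The key observation is that for any fixed integer $b>1$, every partition of $n$ with BG-rank $j$ has a $2$-quotient rank lying in exactly one residue class modulo $b$, so that
\[
\overline{p}_j(n) = \sum_{a=0}^{b-1} \overline{p}_j(a,b;n).
\]
This identity reduces the corollary to an accounting of the asymptotics already supplied by Theorem~\ref{thm: main}.

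First I would fix any convenient value of $b$ (say $b=2$) and substitute the asymptotic from Theorem~\ref{thm: main} into each of the $b$ summands. Since the main term in Theorem~\ref{thm: main} is independent of $a$, the right-hand side is a sum of $b$ identical leading contributions of size $\frac{2}{3^{3/4}(2n)^{5/4}b}e^{\pi\sqrt{2n/3}}$; the factor $1/b$ cancels against the number of residue classes, producing the asserted leading order
\[
\frac{2}{3^{3/4}(2n)^{5/4}}e^{\pi\sqrt{2n/3}}.
\]

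For the error term, I would observe that a sum of $b$ expressions of the form $c(1+O(n^{-1/2}))$ with identical leading constant $c$ equals $bc(1+O(n^{-1/2}))$ so long as $b$ is a fixed constant independent of $n$, which it is. This yields exactly the error $O(n^{-1/2})$ required. There is no genuine obstacle in this argument; the only mild point worth flagging is that Theorem~\ref{thm: main} requires $b>1$, which is precisely why the corollary must be established by summing over a nontrivial set of congruence classes rather than by the naive specialisation $b=1$.
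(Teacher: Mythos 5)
Your proof is correct, but it takes a genuinely different route from the paper's. The paper deduces the corollary by re-inspecting the Circle Method argument inside the proof of Theorem \ref{thm: main}: there one shows that the term $\frac{1}{b}H(1;q) = \frac{1}{b}\sum_n \overline{p}_j(n) q^n$ dominates the other summands of \eqref{eqn: splitting} on both the major and minor arcs, so the asymptotic for $\overline{p}_j(n)$ is just $b$ times that obtained for $\overline{p}_j(a,b;n)$. You instead treat Theorem \ref{thm: main} as a black box and use the trivial combinatorial identity $\overline{p}_j(n) = \sum_{a=0}^{b-1} \overline{p}_j(a,b;n)$ (which also follows from summing \eqref{eqn: splitting} over $a$ and using orthogonality of roots of unity), then sum the $b$ asymptotics term by term. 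Both arguments are valid and equally short; yours has the small advantage of not requiring the reader to revisit the internals of the earlier proof, and you correctly flag the one point of care — the implied constants in the $O(n^{-1/2})$ errors may depend on $a$, but since $b$ is fixed there are only finitely many of them, so taking a maximum preserves the error bound. You also correctly identify why the naive specialisation $b=1$ is unavailable, which is a genuine observation the paper leaves implicit.
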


\begin{remarks}
	\begin{enumerate}[label=(\roman*)]
		\item We remark that neither Theorem \ref{thm: main} or Corollary \ref{cor: asymps of p_j} depend on $j$, implying that asymptotically the BG-rank is also equidistributed over partitions. However, the dependence on $j$ can be seen if one applies further terms arising from the application of Wright's Circle Method in the proof of Theorem \ref{thm: main}.
		
		\item For $b=5$ we recover an asymptotic form of \cite[Theorem 7.1]{BG}.
	\end{enumerate} 
\end{remarks}

A direct application of \cite[Corollary 3.2]{CCM}  to our results gives that both $\overline{p}_j(n)$ and $\overline{p}_j(a,b;n)$ asymptotically satisfy the following convexity results. Similar results for other partition-theoretic objects have been obtained by many authors, see e.g. \cite{BO,HJ,Males}.
\begin{corollary}\label{cor: convexity}
	Let $j$ be even. For large enough even $n_1$ and $n_2$, we have that
	\begin{align*}
		\overline{p}_j(a,b;n_1)\overline{p}_j(a,b;n_2) > \overline{p}_j(a,b;n_1+n_2),
	\end{align*}
and 
\begin{align*}
	\overline{p}_j(n_1) \overline{p}_j(n_2) > \overline{p}_j(n_1+n_2).
	\end{align*}
\end{corollary}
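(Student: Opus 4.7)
The plan is to invoke \cite[Corollary 3.2]{CCM} as a black box applied to the asymptotic formulas of Theorem \ref{thm: main} and Corollary \ref{cor: asymps of p_j}. The cited corollary takes as input a sequence $f(n)$ admitting an asymptotic expansion of the form $f(n) \sim C n^\alpha e^{c\sqrt{n}}(1 + O(n^{-\delta}))$ with $c > 0$, and outputs the convexity inequality $f(n_1) f(n_2) > f(n_1 + n_2)$ for all sufficiently large $n_1, n_2$. I would simply check that our asymptotics fit this template, which they do with $c = \pi\sqrt{2/3}$, $\alpha = -\tfrac{5}{4}$, $\delta = \tfrac{1}{2}$, and leading constants $\tfrac{2}{3^{3/4} 2^{5/4} b}$ (respectively $\tfrac{2}{3^{3/4} 2^{5/4}}$) for the sequences $\overline{p}_j(a,b;n)$ and $\overline{p}_j(n)$ along the even integers. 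The even-$n$ restriction in the hypothesis of Theorem \ref{thm: main} is exactly why the corollary is stated for even $n_1$ and $n_2$, so that $n_1+n_2$ is again even.

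Should one wish to avoid the black box, the inequalities can be proved directly: substituting the asymptotic of Theorem \ref{thm: main} into the ratio
\begin{align*}
\frac{\overline{p}_j(a,b;n_1)\,\overline{p}_j(a,b;n_2)}{\overline{p}_j(a,b;n_1+n_2)}
\end{align*}
one obtains, up to a constant multiplicative factor and $(1+o(1))$ error, the quantity
\begin{align*}
\left(\frac{n_1+n_2}{n_1 n_2}\right)^{\!\frac{5}{4}} \exp\!\left( \pi \sqrt{\tfrac{2}{3}} \left( \sqrt{n_1} + \sqrt{n_2} - \sqrt{n_1+n_2} \right)\right).
\end{align*}
Since $(\sqrt{n_1}+\sqrt{n_2})^2 = n_1 + n_2 + 2\sqrt{n_1 n_2} > n_1 + n_2$, the exponent is strictly positive and tends to infinity as $\min(n_1,n_2) \to \infty$, so the ratio exceeds $1$ for large enough even $n_1, n_2$. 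The argument for $\overline{p}_j(n)$ is identical using Corollary \ref{cor: asymps of p_j}.

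The main (and essentially only) obstacle is verifying that the shape of our asymptotic matches the hypothesis of \cite[Corollary 3.2]{CCM}, in particular that the error term $O(n^{-1/2})$ is of sufficient quality to be absorbed in the estimates; given the form derived from Wright's Circle Method, this is immediate.
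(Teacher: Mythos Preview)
Your proposal is correct and matches the paper's approach exactly: the paper states just before the corollary that it is ``a direct application of \cite[Corollary 3.2]{CCM}'' to Theorem~\ref{thm: main} and Corollary~\ref{cor: asymps of p_j}, which is precisely what you do. Your additional direct argument is a welcome bonus and is sound, since the exponential factor $\exp\bigl(\pi\sqrt{2/3}\,(\sqrt{n_1}+\sqrt{n_2}-\sqrt{n_1+n_2})\bigr)$ dominates the decaying polynomial prefactor and the constant as $\min(n_1,n_2)\to\infty$.
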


The Tur\'{a}n inequalities for functions in the real entire Laguerre--P\'{o}lya class are intricately linked to the Riemann hypothesis \cite{Dimitrov, Sz} and have seen renewed interest in recent years. In fact, the Riemann hypothesis is true if and only if the Riemann Xi function lies in the Laguerre--P\'{o}lya class, and a necessary condition is that the Maclaurin coefficients satisfy Tur\'{a}n inequalities of all orders \cite{Dimitrov,SchurPolya}.

The second order Tur\'{a}n inequality for a sequence $\{a_n\}_{n \geq 0}$ is known as log-concavity, and is satisfied if
\begin{align*}
	a_n^2 \geq a_{n-1}a_{n+1}
\end{align*}
for all $n \geq 1$. Log-concavity for modular type objects is well-studied in the literature, for example \cite{BJMR,CCM,CraigPun,DawMas,DeSalvo}. For an explicit description of the higher order inequalities, we refer the reader to e.g. the introduction of \cite{CraigPun}. 
The higher-order Tur\'{a}n inequalities are linked to the Jensen polynomial associated to a sequence $\{\alpha(n)\}$, given by
\begin{align*}
	J_{\alpha}^{d,n}(X) \coloneqq \sum_{k=0}^{d-1} \binom{d}{k} \alpha(n+k) X^k.
\end{align*}
In particular, all higher-order Tur\'{a}n inequalities are satisfied by the sequence $\alpha(n)$ if the associated Jensen polynomial is hyperbolic. Similar situations to ours have been studied in e.g. \cite{GORZ,LW,OPR}.

\begin{theorem}\label{thm: turan ineqs}
	Let $j$ be even and let  $\widehat{J}_\alpha^{d,2n}$ be the renormalised Jensen polynomials defined in Theorem \ref{Thm: Gorz} and $H_d(X)$ the Hermite polynomials. Then we have
	\begin{align*}
	 \lim_{n \to \infty} \widehat{J}_{\overline{p}_j}^{d,2n}(X) = H_d(X)
	\end{align*}
uniformly for $X$ on compact subsets of $\R$. This implies that both $\overline{p}_j(a,b;2n)$ and $\overline{p}_j(2n)$ asymptotically satisfy all higher order Tur\'{a}n inequalities.
\end{theorem}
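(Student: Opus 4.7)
The plan is to deduce the theorem from the general Griffin--Ono--Rolen--Zagier style result encoded in Theorem \ref{Thm: Gorz}, whose hypothesis is an asymptotic expansion of the sequence $\alpha(n)$ in a form $\alpha(n)\sim C n^{-\beta} e^{A\sqrt{n}}(1+c_1 n^{-1/2}+c_2 n^{-1}+\cdots)$ to sufficiently many orders. Corollary \ref{cor: asymps of p_j} already gives the leading term of such an expansion for $\alpha(n) = \overline{p}_j(2n)$, with $C,\beta,A$ read off as $C=2/3^{3/4}\cdot 4^{-5/4}$, $\beta=5/4$, $A=\pi\sqrt{4/3}$. The bulk of the work is therefore to sharpen Corollary \ref{cor: asymps of p_j} into a full asymptotic series, and then to feed it into the GORZ machinery.

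First, I would revisit the proof of Theorem \ref{thm: main} (which rests on Wright's variant of the circle method applied to $H(\zeta_b^k;q)$ near the dominant pole $q=1$) and iterate the saddle-point expansion to obtain, for every fixed $N\ge 1$,
\begin{equation*}
\overline{p}_j(2n) = \frac{2}{3^{3/4}(4n)^{5/4}} e^{\pi\sqrt{4n/3}}\left(1+\sum_{i=1}^{N}\frac{c_i(j)}{n^{i/2}}+O\!\left(n^{-(N+1)/2}\right)\right).
\end{equation*}
Two ingredients make this routine: the modular transformation of $(q^2;q^2)_\infty^{-1}$ near $q=1$ produces an asymptotic series in the nome parameter $\varepsilon$ to all orders, and the secondary arcs contribute exponentially smaller error, so plugging these expansions into Wright's integral and expanding the saddle-point factor gives a complete series in $n^{-1/2}$. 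The same argument applied to each term on the right of \eqref{eqn: splitting} yields the analogous expansion for $\overline{p}_j(a,b;2n)$, with the leading constant divided by $b$ and the sum over $k=1,\dots,b-1$ contributing only to exponentially smaller error (since $H(\zeta_b^k;q)$ has a strictly smaller polar order at $q=1$ than $H(1;q)$).

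Next, I would verify that the parameters $(A,\beta,C)$ extracted above, together with the sequence of refined coefficients $c_i(j)$, satisfy the hypotheses of Theorem \ref{Thm: Gorz}. These conditions are purely analytic (positivity of $A$, the growth rate $A\sqrt{n}$, existence of arbitrarily many terms in the expansion), and they translate directly into the claimed limit $\widehat{J}_{\overline{p}_j}^{d,2n}(X)\to H_d(X)$ uniformly on compact subsets of $\R$. The very same verification applies to $\overline{p}_j(a,b;2n)$, since it has an expansion of the same form differing only in the overall scaling by $1/b$, and the renormalisation in $\widehat{J}$ cancels that constant.

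Finally, I would invoke the standard equivalence between hyperbolicity of Jensen polynomials and the higher-order Tur\'an inequalities: the Hermite polynomials $H_d(X)$ are hyperbolic with simple real roots, and hyperbolicity is stable under coefficient-wise uniform convergence on compact sets, so for each fixed $d$ the polynomials $J_{\overline{p}_j}^{d,2n}(X)$ and $J_{\overline{p}_j(a,b;\cdot)}^{d,2n}(X)$ are hyperbolic for all sufficiently large $n$, which by the discussion preceding the statement is exactly the assertion that both sequences asymptotically satisfy all higher-order Tur\'an inequalities. The main obstacle is the bookkeeping in promoting Corollary \ref{cor: asymps of p_j} from a one-term asymptotic to a full series; once that is in hand, the remainder is a direct application of the framework of \cite{GORZ,CCM}.
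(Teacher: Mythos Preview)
Your proposal is correct and follows essentially the same route as the paper: upgrade Corollary \ref{cor: asymps of p_j} to a full asymptotic series in $n^{-1/2}$ by pushing the circle method analysis of Theorem \ref{thm: main} to all orders (the paper phrases this as ``adjusting to use the full asymptotic for the $I$-Bessel functions'' in Proposition \ref{WrightCircleMethod}), then feed that expansion into Theorem \ref{Thm: Gorz}, and observe that $\overline{p}_j(a,b;2n)$ differs only by the overall factor $1/b$. The only place the paper is slightly more explicit than you are is in writing out the expansion of $\log(\overline{p}_j(n+k)/\overline{p}_j(n))$ and identifying the concrete sequences $A(n)=\pi\sqrt{1/6n}+O(1/n)$ and $\delta(n)^2=\pi\sqrt{2/3}\binom{1/2}{2}n^{-3/2}+O(n^{-2})$ needed to invoke Theorem \ref{Thm: Gorz}; you gloss this step as ``purely analytic'', which is fine since it is a routine Taylor expansion once the full asymptotic series is in hand.
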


\subsection*{Outline}
We begin in Section \ref{Sec: prelims} by recalling preliminary results needed through the rest of the paper. Section \ref{Sec: proofs} is dedicated to proving Theorem \ref{thm: main} and Corollary \ref{cor: asymps of p_j}. In Section \ref{Sec: Turan} we prove Theorem \ref{thm: turan ineqs}.

\subsection*{Acknowledgements}
The authors thank the Manitoba eXperimental Mathematics Laboratory for their support of the project. 

\section{Preliminaries}\label{Sec: prelims}

\subsection{Asymptotics of infinite $q$-products}
Following \cite{BCMO}, we let
\begin{align*}
	F_1(\zeta; q)\coloneqq \prod_{n=1}^{\infty}\left(1-\zeta q^n\right), \qquad 
\end{align*}
along with Lerch's transcendent
\begin{align*}
	\Phi(z,s,a)\coloneqq \sum_{n=0}^\infty \frac{z^n}{(n+a)^s},
\end{align*}
and for $0\leq \theta < \frac{\pi}{2}$ define the domain $D_{\theta} \coloneqq \left\{ z=re^{i\alpha} \colon r \geq 0 \text{ and } |\alpha| \leq \theta \right\}$. Throughout, the Gamma function is defined as usual by $\Gamma(x)\coloneqq \int_0^\infty t^{x-1} e^{-t} dt $, for $\operatorname{Re}(x)>0$.
Then \cite[Theorem 2.1]{BCMO} in our context reads as follows.
\begin{theorem} \label{Thm: asymptotics F}
	For $b\geq 2$, let $\zeta$ be a primitive $b$-th root of unity. Then as $z \to 0$ in $D_\theta$, we have 
		\begin{align*}
			F_{1}\left(\zeta;e^{-z}\right)  =\frac{1}{\sqrt{1-\zeta}} \, e^{-\frac{\zeta\Phi(\zeta,2,1)}{z}}\left( 1+O\left(|z|\right) \right).
		\end{align*}
\end{theorem}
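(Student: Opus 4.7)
The plan is to prove Theorem \ref{Thm: asymptotics F} by taking logarithms, converting the resulting Dirichlet--type sum into a contour integral via the Mellin transform, and extracting the leading asymptotic terms by shifting the contour past the relevant poles. Writing $\omega$ for the primitive $b$-th root of unity (to avoid a clash with the Riemann zeta function), I begin from the product expansion and use $\log(1-x)=-\sum_{m\geq 1} x^m/m$ to obtain
\begin{align*}
\log F_1(\omega; e^{-z}) = -\sum_{n\geq 1}\sum_{m\geq 1} \frac{\omega^m e^{-nmz}}{m} = -\sum_{m\geq 1} \frac{\omega^m}{m(e^{mz}-1)}.
\end{align*}
Convergence of the double sum is valid for $\re(z)>0$, which is automatic throughout $D_\theta$ (with $\theta<\pi/2$) once $|z|$ is small.

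Next I invoke the standard Mellin representation $\frac{1}{e^{u}-1} = \frac{1}{2\pi i}\int_{(c)} \Gamma(s)\zeta_R(s)\,u^{-s}\,ds$ valid for $c>1$ and $\re(u)>0$, substitute $u=mz$, and interchange summation and integration to obtain
\begin{align*}
\log F_1(\omega; e^{-z}) = -\frac{\omega}{2\pi i}\int_{(c)} \Gamma(s)\zeta_R(s)\,\Phi(\omega,s+1,1)\,z^{-s}\,ds,
\end{align*}
where I used $\sum_{m\geq 1}\omega^m/m^{s+1}=\omega\,\Phi(\omega,s+1,1)$. Since $\omega\neq 1$, the Lerch transcendent $\Phi(\omega,s+1,1)$ is entire in $s$, so the integrand has poles only from $\Gamma(s)$ and $\zeta_R(s)$.

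I then shift the contour from $\re(s)=c$ past $s=1$ and $s=0$ to a vertical line with $\re(s)=-1+\varepsilon$, picking up the residues. At $s=1$, $\zeta_R$ has residue $1$ and the contribution is $-\omega\,\Phi(\omega,2,1)/z$. At $s=0$, $\Gamma(s)$ has residue $1$, and using $\zeta_R(0)=-\frac{1}{2}$ together with the identity $\Phi(\omega,1,1)=-\omega^{-1}\log(1-\omega)$ (Mercator series), the residue is $-\frac{1}{2}\log(1-\omega)$. Summing gives
\begin{align*}
\log F_1(\omega; e^{-z}) = -\frac{\omega\,\Phi(\omega,2,1)}{z} - \tfrac{1}{2}\log(1-\omega) + R(z),
\end{align*}
where $R(z)$ is the shifted integral, which I bound by $O(|z|)$ uniformly for $z\in D_\theta$ using Stirling's estimate $|\Gamma(\sigma+it)|\ll |t|^{\sigma-1/2}e^{-\pi|t|/2}$, polynomial growth of $\zeta_R$ on vertical lines, and uniform boundedness of $\Phi(\omega,\cdot,1)$ on vertical strips (where $|\omega|=1$, $\omega\neq 1$, makes this controllable by an Abel-summation/partial-summation bound). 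Exponentiating then yields the claimed formula with its multiplicative error $(1+O(|z|))$.

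The main obstacle is making the contour shift fully rigorous: one needs uniform control of the Mellin integrand on vertical strips (and uniform-in-argument bounds for $z\in D_\theta$, where the factor $z^{-s}=|z|^{-\sigma}e^{-it\arg z}$ requires $\theta<\pi/2$ so that $e^{|t|\theta}$ is dominated by $e^{-\pi|t|/2}$ from $\Gamma$). The residue identifications themselves are routine once this analytic framework is in place, as is the exponentiation step since $\log(1/\sqrt{1-\omega})=-\tfrac{1}{2}\log(1-\omega)$ on a suitable branch.
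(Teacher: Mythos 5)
The paper does not prove this theorem; it is imported verbatim from \cite[Theorem 2.1]{BCMO}, where the result is established by an Euler--Maclaurin summation argument rather than by Mellin inversion, so your proposal is a genuinely different route. The route is sound: the logarithmic rearrangement $\log F_1(\omega;e^{-z})=-\sum_{m\ge1}\omega^m/\bigl(m(e^{mz}-1)\bigr)$, the Mellin representation of $1/(e^u-1)$ via $\Gamma(s)\zeta(s)$ (here $\zeta$ denotes the Riemann zeta function), the identification $\sum_{m\ge1}\omega^m m^{-(s+1)}=\omega\,\Phi(\omega,s+1,1)$, the residue computations at $s=1$ and $s=0$ (the latter using $\zeta(0)=-\tfrac12$ and $\Phi(\omega,1,1)=-\omega^{-1}\log(1-\omega)$), and the convergence estimates for $z\in D_\theta$ with $\theta<\tfrac\pi2$ all check out. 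The Euler--Maclaurin approach of \cite{BCMO} is somewhat more hands-on and produces the full asymptotic expansion with explicit polynomial correction terms directly; the Mellin route recovers the same expansion by collecting residues at $s=-1,-2,\dots$, so for the single error term needed here the two methods are of essentially equal strength, and which one prefers is largely a matter of taste.

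One technical slip should be fixed. Shifting the contour only to $\operatorname{Re}(s)=-1+\varepsilon$ bounds the remainder $R(z)$ by $O\bigl(|z|^{1-\varepsilon}\bigr)$, which is weaker than the stated $O(|z|)$. To obtain $O(|z|)$, shift instead to $\operatorname{Re}(s)=-1-\varepsilon$ and pick up the simple pole of $\Gamma$ at $s=-1$: its residue contributes a term proportional to $\zeta(-1)\,\Phi(\omega,0,1)\,z$, manifestly $O(|z|)$, and the shifted integral is then $O\bigl(|z|^{1+\varepsilon}\bigr)$. (Alternatively, work on the line $\operatorname{Re}(s)=-1$ with a small indentation around the pole.) You should also make explicit that Abel summation analytically continues $s\mapsto\Phi(\omega,s+1,1)$, for fixed $\omega$ on the unit circle with $\omega\neq1$, only to the half-plane $\operatorname{Re}(s)>-1$, which suffices for a shift to $\operatorname{Re}(s)=-1+\varepsilon$; pushing further left requires the classical analytic continuation of the polylogarithm/Lerch transcendent, which is available but worth invoking by name rather than leaving implicit.
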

In particular, this allows us to obtain estimates of $F_1$ on the major arcs arising from Wright's Circle Method.

\subsection{Wright's Circle Method}
Our central tool is a variant of Wright's Circle Method, which was proved by Bringmann, Craig, Ono, and one of the authors \cite[Proposition 4.4]{BCMO}, following work of Wright \cite{Wright2} - see also Ngo and Rhoades \cite{NgoRhoades}. The essence of Wright's Circle Method is that one uses Cauchy's integral formula to recover the the Fourier coefficients as an integral of the generating function over a complex circle transversed exactly once in the counterclockwise direction. One then splits the circle into major (resp.\@ minor) arcs, where the generating function has relatively large (resp.\@ small) growth. In turn, this provides one with an asymptotic estimate for the coefficients. Often, the major arc is close to $q=1$. In the present paper, we also have a major arc near $q=-1$ owing to the factor of $(q^2;q^2)_\infty^{-2}$ in the first term of the right-hand side of \eqref{eqn: splitting}. 

In \cite{BCMO} the following result is proved when the dominant pole is near $q=1$. We will follow the proof of this result, with very minor modifications to account for a second major arc near $q=-1$. In essence, the major arc near $q=-1$ will mean that we include an extra factor of $2$ in the asymptotic form of the coefficients from the following Proposition.

\begin{proposition} \label{WrightCircleMethod}
	Suppose that $F(q)$ is analytic for $q = e^{-z}$ where $z=x+iy \in \C$ satisfies $x > 0$ and $|y| < \pi$, and suppose that $F(q)$ has an expansion $F(q) = \sum_{n=0}^\infty c(n) q^n$ near 1. Let $N,M>0$ be fixed constants. Consider the following hypotheses:
	
	\begin{enumerate}[leftmargin=*]
		\item[\rm(1)] As $z\to 0$ in the bounded cone $|y|\le Mx$ (major arc), we have
		\begin{align*}
			F(e^{-z}) = z^{B} e^{\frac{A}{z}} \left( \sum_{j=0}^{N-1} \alpha_j z^j + O_M\left(|z|^N\right) \right),
		\end{align*}
		where $\alpha_j \in \C$, $A\in \R^+$, and $B \in \R$. 
		
		\item[\rm(2)] As $z\to0$ in the bounded cone $Mx\le|y| < \pi$ (minor arc), we have 
		\begin{align*}
			\lvert	F(e^{-z}) \rvert \ll_M e^{\frac{1}{\mathrm{Re}(z)}(A - \kappa)},
		\end{align*}
		for some $\kappa\in \R^+$. 
	\end{enumerate}
	If  {\rm(1)} and {\rm(2)} hold, then as $n \to \infty$ we have for any $N\in \R^+$ 
	\begin{align*}
		c(n) = n^{\frac{1}{4}(- 2B -3)}e^{2\sqrt{An}} \left( \sum\limits_{r=0}^{N-1} p_r n^{-\frac{r}{2}} + O\left(n^{-\frac N2}\right) \right),
	\end{align*}
	where $p_r \coloneqq  \sum\limits_{j=0}^r \alpha_j c_{j,r-j}$ and $c_{j,r} \coloneqq  \dfrac{(-\frac{1}{4\sqrt{A}})^r \sqrt{A}^{j + B + \frac 12}}{2\sqrt{\pi}} \dfrac{\Gamma(j + B + \frac 32 + r)}{r! \Gamma(j + B + \frac 32 - r)}$. 
\end{proposition}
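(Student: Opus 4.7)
The plan is to execute Wright's Circle Method in the standard form and track the expansion carefully enough to recover the explicit coefficients $c_{j,r}$. Begin from Cauchy's integral formula
\[
c(n) = \frac{1}{2\pi i} \oint F(q) q^{-n-1} dq,
\]
and substitute $q = e^{-z}$ so that on a circle of radius $e^{-x_n}$ the integral becomes
\[
c(n) = \frac{1}{2\pi} \int_{-\pi}^{\pi} F\bigl(e^{-(x_n+it)}\bigr) e^{n(x_n+it)} dt.
\]
Choose $x_n$ as the saddle point of the exponent $A/z + nz$ on the positive real axis, namely $x_n \coloneqq \sqrt{A/n}$, so that $e^{A/x_n + nx_n} = e^{2\sqrt{An}}$. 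Partition the contour into the major arc $|t| \le Mx_n$ and the minor arc $Mx_n \le |t| \le \pi$, matching the two regions in hypotheses (1) and (2).

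For the minor arc, combine hypothesis (2) with $|e^{n(x_n+it)}| = e^{nx_n} = e^{\sqrt{An}}$ to obtain
\[
\left|\int_{Mx_n \le |t| \le \pi} F(e^{-z})e^{nz} dt\right| \ll e^{2\sqrt{An} - \kappa\sqrt{n/A}},
\]
which is exponentially smaller than the claimed main term and thus absorbed into the error. The fact that $F$ is analytic in the strip $|y| < \pi$ ensures there is no interference from poles of $q$ on the unit circle other than $q=1$.

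For the major arc, insert the expansion from hypothesis (1) to get, for each $0 \le j \le N-1$, an integral of the form
\[
\frac{\alpha_j}{2\pi} \int_{|t| \le Mx_n} z^{B+j} e^{A/z + nz} dt, \qquad z = x_n + it,
\]
plus a remainder whose size is bounded by $|z|^{B+N}$ on the major arc and which contributes $O(n^{(-2B-3)/4} e^{2\sqrt{An}} n^{-N/2})$. The key step is to deform the truncated segment to a full Hankel-type contour around the origin: the error in extending from $|t| \le Mx_n$ to the whole vertical line $\operatorname{Re}(z)=x_n$, and then wrapping it around $0$, is again exponentially smaller because $\operatorname{Re}(A/z + nz)$ decays away from the saddle. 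The resulting Hankel integral is a classical representation of the modified Bessel function and admits the asymptotic expansion
\[
\frac{1}{2\pi i}\int_{\mathcal{H}} z^{B+j} e^{A/z + nz} dz \;\sim\; n^{\frac14(-2(B+j)-3)} e^{2\sqrt{An}} \sum_{r \ge 0} c_{j,r}\, n^{-\frac{r}{2}},
\]
with $c_{j,r}$ precisely as in the statement. Multiplying by $\alpha_j$, summing over $j$, and reindexing by $r' = r+j$ collects the coefficients into $p_{r'} = \sum_{j=0}^{r'} \alpha_j c_{j, r'-j}$, yielding the stated formula.

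The main obstacle is the careful bookkeeping in the Hankel/saddle-point step: one must justify the contour extension uniformly in $j \le N-1$, confirm that the truncation error from hypothesis (1) contributes only at order $n^{-N/2}$ after integration, and match the resulting Gamma-function constants to the closed form for $c_{j,r}$. Since this is precisely the content of \cite[Proposition 4.4]{BCMO}, the present proof would follow that argument verbatim; no new ingredients are required here because the modification needed for a second dominant singularity near $q=-1$ is deferred to the application of the proposition in the proof of Theorem \ref{thm: main}.
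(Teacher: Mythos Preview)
Your proposal is correct and aligns with the paper's treatment: the paper does not give an independent proof of this proposition but simply quotes it as \cite[Proposition~4.4]{BCMO}, and your sketch is exactly the standard Wright--Circle--Method argument carried out there (Cauchy's formula, saddle point $x_n=\sqrt{A/n}$, minor arc bound from hypothesis~(2), major arc reduced to Bessel-type integrals evaluated via \cite[Lemma~3.7]{NgoRhoades}). Your closing remark that the second-major-arc modification is deferred to the proof of Theorem~\ref{thm: main} is also precisely how the paper handles it.
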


\subsection{Hyperbolicity of Jensen polynomials}

We use a result of \cite{GORZ}, where the authors showed that for certain classes of functions, the Jensen polynomials of the coefficients tend to the order $d$ Hermite polynomials $H_d(X)$, in turn implying the asymptotic hyperbolicity of the Jensen polynomials. 

\begin{theorem}[Theorem 3 of \cite{GORZ}]\label{Thm: Gorz}
	Let $\{ \alpha(n)\}$, $\{A(n)\}$, and $\{\delta(n)\}$ be three sequences of positive real numbers with $\delta(n)$ tending to $0$ and satisfying
	\begin{align*}
		\log\left( \frac{\alpha(n+k)}{\alpha(n)} \right) = A(n)k - \delta(n)^2k^2+\sum_{i=3}^{d} g_i(n)k^i +o(\delta(n)^d)
	\end{align*}
	as $n \to \infty$, where each $g_i(n) = o(\delta(n)^i)$ for each $3\leq i \leq d$. Then the renormalized Jensen polynomials 
	\begin{align*}
		\widehat{J}_{\alpha}^{d,n}(X) \coloneqq \frac{\delta(n)^{-d}}{\alpha(n)} J_{\alpha}^{d,n}\left( \frac{\delta(n)X-1}{\exp(A(n))}\right)
	\end{align*}
	satisfy
	\begin{align*}
		\lim_{n \to \infty} \widehat{J}_{\alpha}^{d,n}(X) = H_d(X),
	\end{align*}
	uniformly for $X$ in any compact subset of $\mathbb{R}$. Moreover, this implies that the Jensen polynomials $J_{\alpha}^{d,n}$ are each hyperbolic for all but finitely many $n$.

\end{theorem}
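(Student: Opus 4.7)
The plan is to invoke Theorem \ref{Thm: Gorz} applied to the sequence $\alpha(n) \coloneqq \overline{p}_j(2n)$, and separately to $\alpha(n) \coloneqq \overline{p}_j(a,b;2n)$. Since $j$ is even forces $\overline{p}_j(m)=0$ on odd $m$ (as the generating function $H(\zeta;q)$ is a series in $q^2$ up to the integer shift $q^{(2j-1)j}$ with $(2j-1)j$ even), passing to the step-two subsequence is the natural setting, and the theorem reduces to verifying the Taylor expansion hypothesis of Theorem \ref{Thm: Gorz} with suitable $A(n)$, $\delta(n)$, and $g_i(n)$.

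First, I would refine Corollary \ref{cor: asymps of p_j} and Theorem \ref{thm: main} by invoking Proposition \ref{WrightCircleMethod} with an arbitrarily large choice of $N$, yielding
\begin{align*}
\overline{p}_j(2n) = c_0 \,(2n)^{-5/4} e^{\pi \sqrt{4n/3}} \left( 1 + \sum_{r=1}^{N-1} d_r \, n^{-r/2} + O\!\left(n^{-N/2}\right)\right)
\end{align*}
for explicit $c_0 > 0$ and $d_r \in \mathbb{R}$, and analogously for $\overline{p}_j(a,b;2n)$ (with the additional factor $1/b$). Taking logarithms,
\begin{align*}
\log \alpha(n) = \pi \sqrt{4n/3} - \tfrac{5}{4} \log(2n) + \log c_0 + \sum_{r \geq 1} \widetilde{d}_r \, n^{-r/2} + O\!\left(n^{-N/2}\right).
\end{align*}

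Next, I would Taylor expand $\log \alpha(n+k) - \log \alpha(n)$ about $k=0$. The dominant square-root phase $\pi\sqrt{4n/3}$ supplies the leading behaviour: its $i$-th derivative in $n$ is of size $n^{1/2-i}$, contributing a term of that order to the coefficient of $k^i$. The logarithmic prefactor and the inverse-power corrections $\widetilde{d}_r n^{-r/2}$ contribute coefficients of $k^i$ of strictly smaller order $n^{-i}$ and $n^{-r/2 - i}$ respectively. Collecting terms I would obtain
\begin{align*}
A(n) = \frac{\pi}{\sqrt{3n}} + O(n^{-1}), \qquad \delta(n)^2 = \frac{\pi}{4\sqrt{3}\, n^{3/2}} + O(n^{-2}),
\end{align*}
so $\delta(n) \asymp n^{-3/4} \to 0$, and $g_i(n) = O(n^{1/2-i})$ for $i \geq 3$. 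Since $1/2 - i < -3i/4$ exactly when $i \geq 3$, the required condition $g_i(n) = o(\delta(n)^i)$ is satisfied term-by-term. The Taylor remainder past $k^d$ is $O(n^{-1/2-d})$ (for $k$ in the bounded range $0 \le k \le d-1$), and the asymptotic error contributes $O(n^{-N/2})$; both are $o(\delta(n)^d) = o(n^{-3d/4})$ as soon as $N$ is taken with $N > 3d/2$.

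Theorem \ref{Thm: Gorz} then yields the claimed uniform convergence $\widehat{J}_{\overline{p}_j}^{d,2n}(X) \to H_d(X)$ on compact subsets of $\mathbb{R}$, and asymptotic hyperbolicity of $J_{\overline{p}_j}^{d,2n}$ follows, which is equivalent to all higher-order Tur\'{a}n inequalities for the sequences $\overline{p}_j(2n)$ and $\overline{p}_j(a,b;2n)$. The main technical obstacle is the careful bookkeeping of subleading contributions coming from both the polynomial-in-$n^{-1/2}$ factors in Wright's expansion and the interaction with the logarithmic prefactor, to ensure that every cross-term is absorbed either into $A(n)$, $\delta(n)$, and the $g_i(n)$, or into a remainder of size $o(\delta(n)^d)$; this is routine in principle but delicate because $\delta(n)$ is very small relative to the natural scale $n^{-1/2}$ of the asymptotic corrections.
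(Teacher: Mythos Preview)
Your proposal does not address the stated theorem. Theorem~\ref{Thm: Gorz} is Theorem~3 of \cite{GORZ}, quoted in the paper as a preliminary result without proof; the paper makes no attempt to prove it, and neither do you. What you have written is instead a proof of Theorem~\ref{thm: turan ineqs}, which \emph{applies} Theorem~\ref{Thm: Gorz} to the sequences $\overline{p}_j(2n)$ and $\overline{p}_j(a,b;2n)$.

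Regarded as a proof of Theorem~\ref{thm: turan ineqs}, your argument is correct and follows essentially the same route as the paper's proof in Section~\ref{Sec: Turan}: refine the Wright circle method asymptotic to arbitrary order in $n^{-1/2}$, take logarithms, Taylor-expand in $k$, and read off $A(n)$, $\delta(n)$, and the $g_i(n)$. Your version is in fact somewhat more explicit than the paper's, which only records $A(n)$ and $\delta(n)^2$ and refers to \cite[Proof of Theorem~7]{GORZ} for the verification; you carry out the check $g_i(n)=O(n^{1/2-i})=o(n^{-3i/4})$ for $i\ge 3$ directly and track the remainder size needed ($N>3d/2$). One cosmetic difference: the paper works with $\overline{p}_j(n)$ over even $n$ (so its $A(n)=\pi/\sqrt{6n}$), whereas you reparametrise via $\alpha(n)=\overline{p}_j(2n)$ (giving $A(n)=\pi/\sqrt{3n}$); these are equivalent and your choice is arguably cleaner given that $\overline{p}_j$ vanishes on odd arguments.
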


\section{Proofs of Theorem \ref{thm: main} and Corollary \ref{cor: asymps of p_j}}\label{Sec: proofs}
In this section we prove the main results on the asymptotic behaviour of $\overline{p}_j(a,b;n)$ and $\overline{p}_j(n)$.

\begin{proof}[Proof of Theorem \ref{thm: main}]
	We begin by rewriting
	\begin{align*}
		H(\zeta;q) = \frac{q^{(2j-1)j}}{F_1(\zeta;q^2) F_1(\zeta^{-1};q^2)},
	\end{align*}
noting that $H(1;q) = \frac{q^{(2j-1)j}}{(q^2;q^2)_\infty^2}$. Then by \eqref{eqn: splitting} we obtain
\begin{align*}
	H(a,b;q)  = \frac{q^{(2j-1)j}}{b (q^2;q^2)_\infty^2} +  \frac{1}{b} \sum_{k=1}^{b-1} \zeta_b^{-ak} 	 \frac{q^{(2j-1)j}}{F_1(\zeta_b^j;q^2) F_1(\zeta_b^{-j};q^2)}.
\end{align*}
We aim to prove that the first term on the right-hand side dominates on both the major and minor arcs, and that the resulting asymptotic expansion for $q=e^{-z}$ as $z \to 0$ satisfies the analogue of Proposition \ref{WrightCircleMethod} with a second major arc at $q=-1$ contributing as well.

First consider the major arc near $q=1$, so $|y|\le Mx$ for arbitrary $M>0$. Letting $P(q) \coloneqq \sum_{n \geq 0} p(n)q^n = (q;q)_\infty^{-1}$, we have the classical asymptotic (for $|y|\le Mx$, as $z\to0$) - see e.g. see 5.8.1 of \cite{CS},
\[
P\left(e^{-z}\right) = \sqrt{\frac{z}{2\pi}} e^{\frac{\pi^2}{6z}} (1+O(|z|))
\]
and so
\begin{align*}
 e^{-(2j-1)jz} \left(e^{-2z}; e^{-2z}\right)_\infty^{-2} =  \frac{z}{\pi} e^{\frac{\pi^2}{6z}} (1+O(|z|)),
\end{align*}
where we expanded the first exponential using its Taylor expansion. Note that for $q=-e^{-z}$ near to $q=-1$ the same asymptotic holds, using that $j$ is even.
By Theorem \ref{Thm: asymptotics F}, for $\zeta_b^j \neq 1$ we have that
\begin{align*}
		& F_{1}\left(\zeta_b^j;e^{-2z}\right)  = \frac{1}{\sqrt{1-\zeta_b^j}} \, e^{-\frac{\zeta_b^j\Phi(\zeta_b^j,2,1)}{2z}}\left( 1+O\left(|z|\right) \right),\\
		&F_{1}\left(\zeta_b^{-j};e^{-2z}\right)  =\frac{1}{\sqrt{1-\zeta_b^{-j}}} \, e^{-\frac{\zeta_b^{-j}\Phi(\zeta_b^{-j},2,1)}{2z}}\left( 1+O\left(|z|\right) \right).
\end{align*}
So the major arc bound follows if 
	\begin{align*}
		\frac{1}{\sqrt{(1-\zeta_b^j)(1-\zeta_b^{-j}) }} \, e^{-\frac{\zeta_b^j\Phi(\zeta_b^j,2,1) +\zeta_b^{-j}\Phi(\zeta_b^{-j},2,1)}{2z}}  \left( 1+O\left(|z|\right) \right) \ll \sqrt{\frac{z}{2\pi}} e^{\frac{\pi^2}{6z}} (1+O(|z|)).
	\end{align*}
Since $\zeta \Phi(\zeta,2,1) = \operatorname{Li}_2(\zeta)$ is the dilogarithm function, and $\operatorname{Li}_2(z) + \operatorname{Li}_2(z^{-1}) = -\frac{\pi^2}{6} -\frac{1}{2}\log(-z)^2$, the bound follows if
\begin{align*}
\re \left(	\frac{\pi^2}{12z} + \frac{\log(-\zeta_b^j)^2}{4z} \right) \ll \re \left( \frac{\pi^2}{6z} \right).
\end{align*}
Note that $\log(-\zeta_b^j) = \log(1)+i \arg(-\zeta_b^j) = i \arg(-\zeta_b^j)$, so the condition becomes
\begin{align*}
	\pi^2 - 3\arg(-\zeta_b^j)^2 < 2\pi^2,
\end{align*}
which is clearly satisfied. Thus on the major arcs as $z \to 0$, we obtain that
\begin{align*}
	H(a,b;\pm e^{-z}) =  \frac{z}{b \pi} e^{\frac{\pi^2}{6z}} (1+O(|z|)).
\end{align*}

We now turn to the minor arcs, where $y \geq Mx$. It is well-known that for some $\mathcal{C}>0$ (see e.g. \cite[Lemma 3.5]{BD})
\[
\left|P\left(e^{-z}\right)\right| \le x^{\frac12} e^{\frac{\pi}{6x}-\frac{\mathcal{C}}{x}},
\]
and so 
\begin{align*}
	\left\lvert e^{-(2j-1)jz} \left(e^{-2z}; e^{-2z}\right)_\infty^{-2} \right\rvert \le 2x e^{\frac{\pi}{6x}-\frac{\mathcal{C}}{x}}.
\end{align*}
Next we turn to the function $\frac{1}{F_1(\zeta;q)}$ for a root of unity $\zeta \neq 1$. We have
\begin{align*}
	\log \left(F_1(\zeta;q)^{-1} \right) = \sum_{n \geq 1} -\log\left(1-\zeta q^n\right) = \sum_{n,m \geq 1} \frac{\zeta^m q^{mn}}{m} = \sum_{m \geq 1} \frac{\zeta^m q^m}{m(1-q^m)}.
\end{align*}
We then bound
\begin{align*}
	\log \left(F_1(\zeta;q)^{-1} \right) \leq \left\lvert \frac{\zeta q}{1-q} \right\rvert - \frac{\lvert q \rvert}{1-\lvert q \rvert} + \log P(\lvert q\rvert) 
\end{align*}
As in the proof of Theorem 1.4 of \cite{BCMO}, we obtain that 
\begin{align*}
	\left\lvert \frac{\zeta q}{1-q} \right\rvert - \frac{\lvert q \rvert}{1-\lvert q \rvert} = -\frac{1}{x} +O(1).
\end{align*}
Thus on the minor arc we have 
\begin{align*}
	F_1(\zeta_b^j;e^{-2z})^{-1} F_1(\zeta_b^{-j};e^{-2z})^{-1} \ll \left\lvert e^{-(2j-1)jz} \left(e^{-2z}; e^{-2z}\right)_\infty^{-2} \right\rvert \le 2x e^{\frac{\pi}{6x}-\frac{\mathcal{C}}{x}}.
\end{align*}
Finally, we we follow the proof of Proposition \ref{WrightCircleMethod} to obtain the result. For the major arc near $q=1$, the contribution is precisely that stated in Proposition \ref{WrightCircleMethod} with $B=1$, $N=1$, $A = \frac{\pi^2}{6}$ and $\alpha_0 = \frac{1}{b \pi}$. For the major arc near $q=-1$, the proof of \cite{BCMO} applies {\it mutatis mutandis}; if we label the arc by $C_{-1}$ and let
\begin{align*}
	A_0(n) \coloneqq \frac{1}{2 \pi i} \int_{C_{-1}} \frac{z^{B} e^{\frac{A}{z}}}{q^{n+1}} dq,
\end{align*}
then one may use the same estimates as in \cite{BCMO}. In particular, the $A_0$ term again dominates the asymptotic contribution on the arc $C_{-1}$. Writing $q= -e^{-z}$, since $n$ is even we see that this term will combine with the contributions from the major arc near $q=1$, which we label $C_1$. Then letting $z\mapsto -z$ we have the integral (noting that $B=1$)
\begin{align}\label{eqn: second contribution}
	\frac{1}{2 \pi i} \int_{C_{1}} z e^{-\frac{A}{z} -nz} dz.
\end{align}
The proof of \cite[Proposition 4.4]{BCMO} in turn relies on an estimate of an I-Bessel function in \cite[Lemma 3.7]{NgoRhoades}. One may estimate the integral in \eqref{eqn: second contribution} in exactly the same way, using the asymptotic formula \cite[10.30.5]{nist}. We therefore obtain an extra factor of $2$ for the two major arcs that contribute, and may use the asymptotic form of the coefficients stated in Proposition \ref{WrightCircleMethod} with this factor of $2$ included.
\end{proof}

We now prove Corollary \ref{cor: asymps of p_j}, relying on the proof of Theorem \ref{thm: main}.
\begin{proof}[Proof of Corollary \ref{cor: asymps of p_j}]
	The result follows by noting that the generating function for $\overline{p}_j(n)$ dominated on both the major and minor arc in the proof of Theorem \ref{thm: main}. Thus, up to a factor of $b$, the asymptotic from Theorem \ref{thm: main} holds, giving the result.
\end{proof}

\section{Tur\'{a}n Inequalities}\label{Sec: Turan}
This section is dedicated to proving Theorem \ref{thm: turan ineqs}. We directly apply Theorem \ref{Thm: Gorz} to the asymptotic formulae obtained in the previous section.

\begin{proof}[Proof of Theorem \ref{thm: turan ineqs}]
	We present the proof for $\overline{p}_j(n)$, as the proof for $\overline{p}_j(a,b;n)$ differs only by a factor of $b$. Applying Proposition \ref{WrightCircleMethod} but adjusting to use the full asymptotic for the $I$-Bessel functions occurring in its proof, it is easy to show that there are constants $c_\nu$ such that as $n \to \infty$
	\begin{align*}
				\overline{p}_j(n) = \frac{2}{3^{\frac{3}{4}} (2n)^{\frac{5}{4}} } \exp\left({\pi \sqrt{\frac{2n}{3}}}\right) \exp\left( c_0 + \frac{c_1}{n^{\frac{1}{2}}} + \frac{c_2}{n} +\dots \right)
	\end{align*}
to all orders of $n$.

	It follows in a similar way to \cite[Proof of Theorem 7]{GORZ} that
	\begin{align*}
		\log\left( \frac{\overline{p}_j(n+k)}{\overline{p}_j(n)}\right) \sim \pi \sqrt{\frac{2}{3}} \sum_{i \geq 1} \binom{1/2}{i} \frac{k^i}{n^{i-\frac{1}{2}}} - \frac{5}{4} \sum_{i \geq 1} \frac{(-1)^{i-1}k^i}{in^i} + \sum_{s,t\geq 1} c_t \binom{-t}{s} \frac{k^s}{n^{\frac{s+t}{2}}}.
	\end{align*}
It is not difficult to see that one can then apply Theorem \ref{Thm: Gorz} with the sequences $A(n)$, $\delta(n)$ defined via
\begin{align*}
	A(n) = \pi \sqrt{\frac{1}{6n}} + O\left(\frac{1}{n} \right), \qquad \delta(n)^2 = \pi \sqrt{\frac{2}{3}} \binom{1/2}{2} n^{-\frac{3}{2}} + O\left(n^{-2}\right). 
\end{align*}to obtain the result.
\end{proof}
We remark that it is not difficult to see that any function satisfying Proposition \ref{WrightCircleMethod} will have an associated Jensen polynomial that is asymptotically hyperbolic. We pose the following question for future research: can one prove lower bounds $N_{A,B}(d)$ such that the Jensen polynomial associated to such a function is hyperbolic for all $n>N_{A,B}(d)$? A resolution would immediately yield effective results for large swathes of partition theoretic objects, coefficients of modular-type objects, and topological invariants.

Moreover, since the Jensen polynomial attached to the sequences $\overline{p}_j(n)$ and $\overline{p}_j(a,b;n)$ for even $n$ are each eventually hyperbolic, the generating functions 
\begin{align*}
	\sum_{n \geq 0} \frac{\overline{p}_j(a,b;2n)}{n!} x^n, \qquad \sum_{n \geq 0} \frac{\overline{p}_j(2n)}{n!} x^n
\end{align*}
can be considered as new examples of functions in the shifted Laguerre--P\'{o}lya class, as in recent work of Wagner \cite{Wagner}. Since the main asymptotic term in each case arises from a weakly holomorphic modular form with at most a pole at infinity, this follows from the fact that by \cite{GORZ} all such functions lie in the shifted Laguerre--P\'{o}lya class. However, the coefficients $\overline{p}_j(a,b;2n)$ themselves are not coefficients of a such a modular form, and so provide at least a slightly more general object lying in this new class of object.

\subsection*{Declarations}
The authors have no competing interests to declare that are relevant to the content of this article.

\end{document}